\documentclass[11pt]{amsart}
\usepackage[T1]{fontenc}
\usepackage{amsmath,amssymb,amsthm,mathtools,booktabs}
\usepackage[expansion=false]{microtype}
\usepackage[hidelinks]{hyperref}
\hypersetup{pdftitle={A Fricke transformation for cubic-residue eta products of level 13},
 pdfauthor={Redacted}}
\newtheorem{theorem}{Theorem}[section]
\newtheorem{proposition}[theorem]{Proposition}
\newtheorem{lemma}[theorem]{Lemma}
\newtheorem{corollary}[theorem]{Corollary}
\theoremstyle{remark}
\newtheorem{remark}[theorem]{Remark}
\DeclareMathOperator{\ord}{ord}
\DeclareMathOperator{\divisor}{div}
\newcommand{\HH}{\mathbb H}
\newcommand{\QQ}{\mathbb Q}
\newcommand{\ZZ}{\mathbb Z}

\newcommand{\ttil}{\widetilde t}
\title[A Fricke transformation at level 13]{A Fricke transformation for cubic-residue eta products of level 13}

\author{Cetin Hakimoglu-Brown}
\subjclass[2020]{Primary 11F20; Secondary 11F27, 11P84, 11R16}
\keywords{Generalized Dedekind eta function, Fricke involution, Gaussian period, modular unit}
\date{}
\begin{document}
\begin{abstract}
We prove an explicit Fricke transformation for the two-dimensional space generated by normalized reciprocal products on the cubic-residue cosets modulo 13. The transformation matrix is a scalar multiple of a matrix of differences of cubic Gaussian periods. Its projective action is defined over the cyclic cubic field, whereas the normalized matrix is defined over the real cyclotomic field. The proof uses generalized eta functions, a modular unit of degree two on $X_1(13)$, and a five-coefficient identity. A separate divisor argument lifts the projective transformation to the asserted linear transformation. We also record the Fricke images at an arbitrary prime $p\equiv1\pmod3$ and prove that their normalized sine constants belong to the associated cyclic cubic subfield.
\end{abstract}
\maketitle

\section{Introduction}
Reciprocal products supported on residue classes are a familiar source of
$q$-series with modular transformation laws. Generalized eta functions and
Siegel functions provide a systematic way to transform their individual
factors~\cite{KL,Yang04}. A more specific question is whether a prescribed
small space of such products is preserved by a given transformation, and
whether its transformation matrix reflects the arithmetic of the residue
classes. We answer this question for the cubic-residue products modulo 13:
their natural normalizations span a two-dimensional space preserved by the
Fricke involution, with an explicit matrix expressed in cubic Gaussian
periods.

Put $q=e^{2\pi i\tau}$ for $\tau\in\HH$, and write
\[
 (a_1,\ldots,a_k;q)_\infty=\prod_{i=1}^k\prod_{n\ge0}(1-a_iq^n).
\]
All fractional powers mean $q^r=e^{2\pi ir\tau}$. The cubic residues modulo
13 and their two cosets are
\begin{equation}\label{eq:cosets}
 C_0=\{\pm1,\pm5\},\qquad C_1=\{\pm2,\pm3\},\qquad C_2=\{\pm4,\pm6\}.
\end{equation}
Define
\begin{equation}\label{eq:products}
 \begin{split}
 P_0(q)&=(q,q^5,q^8,q^{12};q^{13})_\infty^{-1},\\
 P_1(q)&=(q^2,q^3,q^{10},q^{11};q^{13})_\infty^{-1},\\
 P_2(q)&=(q^4,q^6,q^7,q^9;q^{13})_\infty^{-1}.
 \end{split}
\end{equation}
For $\zeta=e^{2\pi i/13}$, let
\[
 \eta_j=\sum_{a\in C_j}\zeta^a,\qquad
 \alpha=\eta_1-\eta_2,\quad \beta=\eta_0-\eta_2,\quad \gamma=\eta_1-\eta_0.
\]
The three normalized products satisfy
$q^{-1/6}P_0=q^{-1/6}P_1+q^{5/6}P_2$, by a specialization of the
Weierstrass theta identity (Lemma~\ref{lem:theta}). Thus two components
suffice for the following transformation law.

\begin{theorem}\label{thm:main}
For every $\tau\in\HH$, the vector
\[
 H(\tau)=\begin{pmatrix}q^{-1/6}P_0(q)\\q^{-1/6}P_1(q)\end{pmatrix}
\]
satisfies
\begin{equation}\label{eq:main}
 H\left(-\frac1{13\tau}\right)=S_{13}H(\tau),\qquad
 S_{13}=\frac1{\sqrt{13}}\begin{pmatrix}\gamma&\beta\\\alpha&-\gamma\end{pmatrix}.
\end{equation}
Moreover $S_{13}^2=I$ and $H(\tau+1)=e^{-\pi i/3}H(\tau)$.
\end{theorem}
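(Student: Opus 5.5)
The plan is to write each normalized product as a generalized Dedekind eta quotient and then transport the Fricke involution through the Siegel/Klein function transformation laws. Concretely, for $0<g<13$ put
\[
\eta_{13,g}(\tau)=q^{\frac{13}{2}B_2(g/13)}(q^g;q^{13})_\infty(q^{13-g};q^{13})_\infty,\qquad B_2(x)=x^2-x+\tfrac16 .
\]
Then $q^{-1/6}P_j=\bigl(\prod_{g\in C_j,\,0<g<7}\eta_{13,g}\bigr)^{-1}$ once the exponents are checked. For $C_0$ one gets $\tfrac{13}{2}\bigl(B_2(1/13)+B_2(5/13)\bigr)=\tfrac16$, and similarly $\tfrac16$ for $C_1$; for $C_2$ the sum is $-\tfrac56$, which matches the $q^{5/6}P_2$ in Lemma~\ref{lem:theta}.

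The translation law $H(\tau+1)=e^{-\pi i/3}H(\tau)$ is then immediate, because $P_j(q)$ is a power series in $q$ and $q^{-1/6}$ contributes exactly the factor $e^{-2\pi i/6}$. The identity $S_{13}^2=I$ is pure algebra. The $(1,1)$ entry of $S_{13}^2$ is $(\gamma^2+\alpha\beta)/13$, and the off-diagonal entries vanish automatically. So it suffices to verify $\gamma^2+\alpha\beta=13$ using the Gaussian period relations
\[
\eta_0+\eta_1+\eta_2=-1,\qquad \eta_0\eta_1+\eta_1\eta_2+\eta_2\eta_0=-4,
\]
together with the rule for $\eta_i\eta_j$ in terms of the cyclotomic numbers of order $3$ modulo $13$.

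For the Fricke law itself, I would use that $\tau\mapsto-1/(13\tau)$ sends each $\eta_{13,g}$ to a finite sum. In Siegel-function language, $\eta_{13,g}(-1/(13\tau))$ equals, up to an explicit constant, a product $\prod$ of Klein forms $\mathfrak k_{(0,g/13)}(13\tau)$-type terms. Expanding these gives theta-type functions whose $q$-expansions involve $\zeta^{ga}$.

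The cleanest route, and I expect the paper's, avoids expanding by hand. Consider the ratio
\[
t(\tau)=\frac{q^{-1/6}P_1}{q^{-1/6}P_0}=\frac{\eta_{13,1}\eta_{13,5}}{\eta_{13,2}\eta_{13,3}} .
\]
This is a modular unit on $X_1(13)$; its degree is $2$ by an order-at-cusps computation. Its Fricke image $\tilde t(\tau)=t(-1/(13\tau))$ is again a modular function for the conjugated group, and it is determined by its divisor and leading coefficient. The key claim is that
\[
\tilde t=\frac{\alpha-\gamma t}{\gamma+\beta t},
\]
i.e.\ a Möbius transformation. I would prove this by showing that both sides have the same poles, each a single cusp of degree bounded by $2$, and then matching finitely many Fourier coefficients. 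By the Sturm-type bound for degree-two functions, five coefficients suffice; this is the \lq\lq five-coefficient identity\rq\rq. Computing the relevant coefficients of $\tilde t$ requires expanding the Siegel functions at the cusp $0$. Their constant terms are products $\prod(1-\zeta^{a})$, whose ratios reduce to the periods $\alpha,\beta,\gamma$.

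Finally, to lift from the projective statement to the linear one, I would compute $q^{-1/6}P_0(-1/(13\tau))$ directly, using the eta transformation for the individual $\eta_{13,g}$ together with its behaviour at $\tau\to i\infty$. This fixes the scalar, i.e.\ the factor $1/\sqrt{13}$ and the first row $(\gamma,\beta)$. A divisor comparison then shows that $H_0(-1/(13\tau))/(\gamma H_0+\beta H_1)$ is a holomorphic unit without zeros or poles on $X_0(13)$-type curves, hence constant. That constant is evaluated at $\tau=i/\sqrt{13}$: there $-1/(13\tau)=\tau$, so $S_{13}$ must fix $H(i/\sqrt{13})$, which is consistent with $S_{13}^2=I$.

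I expect the main obstacle to be the exact determination of the constant, including its sign and the choice of square root. The Möbius identity only gives $S_{13}$ up to a scalar. Pinning down that scalar requires careful bookkeeping of the $\prod 2\sin(\pi g/13)$ factors arising from the Siegel function transformation, and then proving that their ratio equals $\gamma/\sqrt{13}$, which needs the sine-product evaluation in terms of Gaussian periods.
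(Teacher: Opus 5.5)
Your proposal is correct and is essentially the paper's proof: Fricke images of the generalized eta functions via Siegel functions, with the sine constants evaluated by $4s_0\alpha=4s_1\beta=4s_2\gamma=\sqrt{13}$; a degree-two modular unit whose Fricke image is forced to be a M\"obius transform by a polar-degree bound of four plus five coefficients at $\infty$; and a divisor argument with the leading term as $\tau\to i\infty$ to fix the scalar. Your use of $G_1/G_0=1/(1+t)$ instead of the paper's $t=G_2/G_1$, and lifting through $G_0$ rather than $G_1$, is only a change of coordinates.

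When writing it out, correct the following points. First, the five-coefficient count rests on the difference (or cleared-denominator combination) having polar degree at most four; the two sides are not known in advance to share poles. Second, the curve is $X_1(13)$, not $X_0(13)$, since the diamond operators permute the $G_j$. The lifting quotient is a priori modular only after taking sixth powers. Third, the absence of zeros of $\gamma H_0+\beta H_1$ on $\HH$ must itself be deduced from the M\"obius law, because numerator and denominator share no zero as $\gamma^2+\alpha\beta=13$. Finally, the constant is pinned down by the $i\infty$ limit. Evaluation at the fixed point $i/\sqrt{13}$ alone gives it only up to sign, unless you add a positivity argument.
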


The two scalar identities and their first matching coefficients are displayed
in Section~\ref{subsec:expansion}.

\subsection{Two model transformations}
The classical comparison is the normalized Rogers--Ramanujan pair
\[
 \mathbf R(\tau)=
 \begin{pmatrix}
 q^{-1/60}(q,q^4;q^5)_\infty^{-1}\\
 q^{11/60}(q^2,q^3;q^5)_\infty^{-1}
 \end{pmatrix}.
\]
Its inversion formula is
\begin{equation}\label{eq:RR-context}
 \mathbf R(-1/\tau)=\frac2{\sqrt5}
 \begin{pmatrix}
 \sin(2\pi/5)&\sin(\pi/5)\\
 \sin(\pi/5)&-\sin(2\pi/5)
 \end{pmatrix}\mathbf R(\tau);
\end{equation}
see, for example,~\cite[Theorem 1]{Ono}. This is a two-dimensional product
space with an explicit arithmetic inversion matrix.

A related example appears in Mizuno's study of Nahm sums for
symmetrizable matrices~\cite{Mizuno}. Put
\[
 B_a(q)=(q^a,q^3,q^6,q^{9-a};q^9)_\infty^{-1}\quad(a=1,2,4),
 \qquad
 \mathbf K_9(\tau)=
 \begin{pmatrix}q^{-1/18}B_1(q)\\q^{5/18}B_2(q)\\q^{11/18}B_4(q)\end{pmatrix}.
\]
The product-side form of~\cite[(9)]{Mizuno} is
\begin{equation}\label{eq:KR-context}
 \mathbf K_9(-1/\tau)=
 \begin{pmatrix}
 b_1&b_2&b_4\\b_2&-b_4&-b_1\\b_4&-b_1&b_2
 \end{pmatrix}\mathbf K_9(\tau/3),\qquad
 b_k=\frac1{2\sqrt3\sin(k\pi/9)}.
\end{equation}
Mizuno stated this formula without proof; for the corresponding
Kanade--Russell sums it would follow from sum--product identities that
remain open. It has since been proved on the product side: Wang and
Zhang~\cite[Theorem 1.1]{WZ} establish~\eqref{eq:KR-context} exactly as
displayed, and Mizuno has an independent proof via the $A_2$ Macdonald
identity~\cite[Remark 2]{WZ}. The coefficients are reciprocals of sines,
and the product modulus 9 is accompanied by the rescaling $\tau/3$.
Further explicit transformation formulas for generalized rank-two and
rank-three Nahm-sum vectors have been proved by Wang and
Wang~\cite{WW2,WW3}.

The proof in~\cite{WZ} is the closest precedent for the present paper. It
writes each component of $\mathbf K_9$ as the reciprocal of a product of
two generalized eta functions, applies Yang's transformation formula to
express the Fricke image as a $\zeta_9$-twisted product, and then
re-expands that product by Garvan's 3-dissection of the crank generating
function. The first two steps are the same as in Sections~2--3 below. The
third is where the methods diverge: at level 13 no comparable dissection
is available, and the re-expansion is instead obtained from the geometry
of $X_1(13)$, through a modular unit of degree two, a divisor bound, and
five explicit coefficients. Two further differences are structural. The
supports at modulus 9 overlap in $\{3,6\}$, whereas the three
sets~\eqref{eq:cosets} partition all nonzero residues modulo 13, which is
what makes the product relation of Lemma~\ref{lem:theta} and the scalar
lifting of Section~6 available; and the transformation matrix here is
expressed through cubic Gaussian periods, with the field-theoretic
consequence described next.

\subsection{The level-13 products and their arithmetic}
The products~\eqref{eq:products} are the product sides of three
conjectural index-$(1,13)$ Rogers--Ramanujan-type identities, all sharing
the quadratic form $2r^2+13rs+26s^2$, which is forced by a conductor-13
Rogers dilogarithm relation. One of them is
\begin{equation}\label{eq:motivation-conjecture}
 \sum_{r,s\ge0}
 \frac{q^{2r^2+13rs+26s^2}(1+q^{2r+13s+1})}
 {(q;q)_r(q^{13};q^{13})_s}
 \overset{?}{=}P_0(q),
\end{equation}
where $(a;q)_n=\prod_{j=0}^{n-1}(1-aq^j)$. Its two companions have the
same quadratic part and product sides $P_1$ and $P_2$. These conjectures
motivate the choice of products, but the present theorem concerns the
products alone and neither assumes nor proves the infinite sum--product
identities.

In contrast to the overlapping modulus-9 supports above, the three sets
$C_j$ partition all nonzero residues modulo 13. They index both the product
factors and the Gaussian periods in~\eqref{eq:main}. The matrix also has the
trigonometric expression
\begin{equation}\label{eq:sine-context}
 S_{13}=\frac14
 \begin{pmatrix}s_2^{-1}&s_1^{-1}\\s_0^{-1}&-s_2^{-1}\end{pmatrix},
 \qquad
 s_j=\prod_{\substack{a\in C_j\\1\le a\le6}}\sin\frac{\pi a}{13}.
\end{equation}
Each $s_j$ is a product of two sines. Quadratic Gauss-sum identities prove
that~\eqref{eq:main} and~\eqref{eq:sine-context} agree
(Lemma~\ref{lem:gauss}). In the period description, $\sqrt{13}S_{13}$ and the
induced projective transformation are defined over the cyclic cubic field
$K=\QQ(\eta_0)$, while $S_{13}$ itself is defined over
$K(\sqrt{13})=\QQ(\zeta)^+$. Thus the product supports and the projective
coefficients are controlled by the same cubic subfield, without requiring
the normalized matrix entries to belong to it.

\subsection{Theta identities, modular units, and the proof}
The level-13 theta literature includes Ramanujan's tredecic identities and
their proofs by O'Brien and Evans. Vasuki and Prabhu~\cite{VP} give new proofs
using the quintuple-product and Weierstrass identities, together with further
theta and partition applications. Our starting relation $P_0=P_1+qP_2$ is a
standard specialization of the latter identity. The additional assertion
here is that its two-dimensional normalized product space is preserved by
$W_{13}:\tau\mapsto-1/(13\tau)$, with the explicit matrix~\eqref{eq:main}.

There is also a geometric explanation for the projective part of the
answer. Modular-unit calculations on the genus-two curve $X_1(13)$ occur,
for example, in Brunault's work on Mahler measure~\cite{Brunault}. In the
present setting, the modular unit $t=qP_2/P_1$ has degree two on
$X_1(13)$, as the cusp calculation in Section~4 shows. It therefore gives a
hyperelliptic coordinate. Uniqueness
of the degree-two map up to a fractional linear change of coordinate implies
that the automorphism $W_{13}$ acts fractionally linearly on $t$. This
explains the form of the projective action; determining its coefficients in
the specified product normalization and lifting it to a linear action on
$H$ still require the calculations below.

The proof first computes the Fricke images by the standard transformation
of Siegel functions. A cusp-divisor bound then reduces the required ratio
identity to five coefficients, which are displayed explicitly. A separate
divisor calculation determines the common scalar and completes the linear
transformation. Finally, Section~7 records the Fricke images for every prime
$p\equiv1\pmod3$ and proves that the corresponding normalized sine constants
always lie in the cyclic cubic subfield. That general field statement is
separate from the two-dimensional transformation established here at level
13.

\section{Eta products and a theta identity}
For $1\le g\le6$, set
\begin{equation}\label{eq:eta}
 E_g(\tau)=q^{\frac{13}{2}B_2(g/13)}
 \prod_{\substack{n>0\\n\equiv\pm g\ (13)}}(1-q^n),
 \qquad B_2(x)=x^2-x+\frac16.
\end{equation}
These are generalized eta functions in the convention of Yang~\cite{Yang04,Yang09}. Their powers and suitable quotients are modular functions; the individual factors need not be invariant on $\Gamma_1(13)$.
Define
\begin{equation}\label{eq:G}
 G_0=(E_1E_5)^{-1},\quad G_1=(E_2E_3)^{-1},\quad G_2=(E_4E_6)^{-1}.
\end{equation}
Direct evaluation of the Bernoulli exponents gives
\begin{equation}\label{eq:normalization}
 G_0=q^{-1/6}P_0,\qquad G_1=q^{-1/6}P_1,\qquad G_2=q^{5/6}P_2.
\end{equation}
Thus $H=(G_0,G_1)^{\mathsf T}$. These formulas specify the multipliers, rather than claiming invariance of $G_j$ on $\Gamma_1(13)$.

\begin{lemma}\label{lem:theta}
One has
\begin{equation}\label{eq:theta-relation}
 G_0=G_1+G_2,\qquad P_0P_1P_2=\frac{(q^{13};q^{13})_\infty}{(q;q)_\infty}.
\end{equation}
\end{lemma}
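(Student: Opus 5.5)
The second identity is the easier one, so I will prove it first. The twelve nonzero residues modulo 13 are partitioned by $C_0,C_1,C_2$, and each $P_j$ is the reciprocal of $\prod_{n\equiv a\,(13),\,a\in C_j}(1-q^n)$. Hence
\[
 P_0P_1P_2=\prod_{13\nmid n}(1-q^n)^{-1}=\frac{(q^{13};q^{13})_\infty}{(q;q)_\infty}.
\]

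For the first identity $G_0=G_1+G_2$, I will use \eqref{eq:normalization} to rewrite it as the product identity $P_0=P_1+qP_2$. I then clear denominators by multiplying through by $(P_0P_1P_2)^{-1}=(q;q)_\infty/(q^{13};q^{13})_\infty$, and also by $(q^{13};q^{13})_\infty^{k}$ for a suitable $k$. This puts each term in the form of a product of three Jacobi-type theta products
\[
 f(a)=(q^a,q^{13-a},q^{13};q^{13})_\infty .
\]
Using the pairing $C_j=\{\pm a,\pm b\}$, the product $P_j^{-1}$ equals $f(a)f(b)/(q^{13};q^{13})^2_\infty$. The target identity therefore becomes
\[
 f(2)f(3)f(4)f(6)=f(1)f(5)f(4)f(6)+q\,f(1)f(5)f(2)f(3)
\]
up to a common factor. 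Equivalently, after dividing by $f(1)\cdots f(6)$, it reads
\[
 \frac1{f(1)f(5)}=\frac1{f(2)f(3)}+\frac{q}{f(4)f(6)}.
\]

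The Weierstrass three-term identity gives, for theta functions $\theta(x)=(x,q/x,q;q)_\infty$ in base $q^{13}$,
\[
 \theta(xy)\theta(x/y)\theta(uv)\theta(u/v)-\theta(xv)\theta(x/v)\theta(uy)\theta(u/y)=\frac{u}{y}\,\theta(yv)\theta(y/v)\theta(xu)\theta(x/u).
\]
I will specialize $x,y,u,v$ to powers $q^{c}$ chosen so that the six arguments become $q^{a\pm b}$ with the needed residue pairs. Up to reflection $a\mapsto 13-a$, the three-term combination then produces exactly the products $f(1)f(5)$, $f(2)f(3)$ and $f(4)f(6)$, each multiplied against the complementary pairs. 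A convenient choice uses half-integer exponents to handle parity, for instance $x=q^{7/2}$ and $y=q^{3/2}$, which give $q^{5},q^{2}$. Alternatively I can work in base $q^{26}$ with squared variables. The powers of $q$ that reflections introduce, via $\theta(q^{13}/x)=\theta(x)$ and $\theta(q^{-a})=-q^{-a}\theta(q^a)$, have to combine into precisely the single factor $q$ in front of the $P_2$ term.

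I expect the main obstacle to be this bookkeeping: finding the exact specialization and tracking signs and powers of $q$. As a fallback that avoids guessing the parameters, I will argue modularly, in the spirit of the paper's later sections. Consider $F=G_1/G_0+G_2/G_0$. By \eqref{eq:normalization} and the transformation of generalized eta functions \cite{Yang04}, $F$ is a modular function on $\Gamma_1(13)$. I will show that $F-1$ has no poles by computing orders at the cusps from $\ord E_g=\tfrac{13}{2}B_2$-type formulas, and that $F-1$ vanishes at $\infty$. Together these force $F\equiv1$. In this fallback, the remaining work is checking the cusp orders and a few $q$-coefficients, which I would verify directly: $P_0=1+q+q^2+\dots$ and $P_1+qP_2=1+q+q^2+\dots$.
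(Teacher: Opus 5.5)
Your proof of the product formula, your reduction of $G_0=G_1+G_2$ to
\[
 f(2)f(3)f(4)f(6)=f(1)f(4)f(5)f(6)+q\,f(1)f(2)f(3)f(5),
\]
and your form of the Weierstrass identity are all correct, and this is the paper's route. Write your identity as $A-C=(u/y)B$, where $A$, $B$, $C$ are the products with arguments $\{xy^{\pm1},uv^{\pm1}\}$, $\{yv^{\pm1},xu^{\pm1}\}$ and $\{xv^{\pm1},uy^{\pm1}\}$ respectively.

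\textbf{Main route: the specialization is missing.} The specialization is the whole content of the argument, and you never produce it. You say what it must achieve and call it the main obstacle, but you do not exhibit it. It exists with integer exponents: the paper takes base $q^{13}$ and $(x,y,u,v)=(q,q^3,q^2,q^5)$. Using $\theta(q^a)=f(a)$, $f(13-a)=f(a)$ and $\theta(z^{-1})=-z^{-1}\theta(z)$, this gives
\[
 A=q^{-5}f(2)f(3)f(4)f(6),\qquad C=q^{-5}f(1)f(4)f(5)f(6),\qquad B=q^{-3}f(1)f(2)f(3)f(5),
\]
with $u/y=q^{-1}$. Multiplying by $q^5$ gives the displayed identity, and the single factor $q$ appears automatically. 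Your half-integer start can also be completed, with $u=q^{17/2}$ and $v=q^{11/2}$, but it is not needed.

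\textbf{Fallback: the key step fails as stated.} The modular fallback is a genuinely different route, and $F$ is indeed modular on $\Gamma_1(13)$. However, cusp orders cannot show that $F-1$ is pole-free. The order formula applies only to eta quotients, not to sums, and both summands of your $F$ have poles:
\[
 \divisor(G_1/G_0)=[c_2]+[c_3]-[c_4]-[c_6],\qquad \divisor(G_2/G_0)=[c_1]+[c_5]-[c_4]-[c_6].
\]
So the orders only bound the polar divisor of $F-1$ by $[c_4]+[c_6]$.

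What closes the argument is a valence count. The cusp $c_1=\infty$ has width one, and $F-1=(P_1+qP_2-P_0)/P_0$. Since $P_0\equiv P_1+qP_2\equiv 1+q+q^2 \pmod{q^3}$, $F-1$ has a zero of order at least three against at most two poles, which forces $F\equiv 1$. Mere vanishing at $\infty$ would not suffice.

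Repaired this way, the fallback is valid. It replaces the explicit theta specialization by Yang's modularity criterion and cusp orders, in the style of the paper's own five-coefficient argument. The paper's proof, by contrast, is a purely formal $q$-series identity that needs no modularity at all.
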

\begin{proof}
Write $\theta(x;Q)=(x,Q/x;Q)_\infty$ and use the same notation with several arguments for their product. In the normalization of~\cite{Koornwinder}, the Weierstrass identity reads
\begin{align*}
 &yu\,\theta(xy,x/y,vu,v/u;Q)
 +uv\,\theta(xu,x/u,yv,y/v;Q)\\
 &\hspace{35mm}+vy\,\theta(xv,x/v,uy,u/y;Q)=0.
\end{align*}
Set $Q=q^{13}$ and $(x,y,u,v)=(q,q^3,q^2,q^5)$. Writing $\theta_j=\theta(q^j;Q)$, and using
$\theta(z^{-1};Q)=-z^{-1}\theta(z;Q)$ and $\theta(Q/z;Q)=\theta(z;Q)$, gives
\[
 \theta_2\theta_3\theta_4\theta_6
 =\theta_1\theta_4\theta_5\theta_6
   +q\theta_1\theta_2\theta_3\theta_5.
\]
Divide by $\theta_1\cdots\theta_6$ to obtain $P_0=P_1+qP_2$, and apply~\eqref{eq:normalization}. The product formula follows because the cosets partition all nonzero residues modulo 13.
\end{proof}

Set
\begin{equation}\label{eq:t}
 t=\frac{G_2}{G_1}=\frac{E_2E_3}{E_4E_6}=q\frac{P_2}{P_1}.
\end{equation}
For reference, its initial expansion is
\begin{equation}\label{eq:t-expansion}
 t=q-q^3-q^4+q^5+q^6-2q^{11}-q^{12}+O(q^{13}).
\end{equation}

\section{Periods and the Fricke image}
\begin{lemma}\label{lem:periods}
The periods are the three roots of $X^3+X^2-4X+1$, and
\begin{equation}\label{eq:period-coordinates}
 \eta_1=2-2\eta_0-\eta_0^2,\qquad
 \eta_2=-3+\eta_0+\eta_0^2.
\end{equation}
In particular,
\begin{equation}\label{eq:abc}
 \alpha=\beta+\gamma,\quad \gamma^2+\alpha\beta=13,
 \quad \beta^2+\alpha\gamma=13,\quad \alpha\beta\gamma=13.
\end{equation}
\end{lemma}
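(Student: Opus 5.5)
We treat this as a computation in the Gaussian periods of $\QQ(\zeta)$. First we get the symmetric functions of $\eta_0,\eta_1,\eta_2$.

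\textbf{Symmetric functions.} Since $C_0\cup C_1\cup C_2$ is all of $(\ZZ/13)^\times$, we have $\eta_0+\eta_1+\eta_2=-1$. For $e_2=\sum_{i<j}\eta_i\eta_j$, expand each product $\eta_i\eta_j$ as $16$ terms $\zeta^{a+b}$. No pair from distinct cosets sums to $0$ modulo $13$, because each coset is closed under negation. Counting occurrences, $e_2$ is then a multiple of $\sum_{a\ne0}\zeta^a=-1$, with $48/12=4$ occurrences of each nonzero residue, so $e_2=-4$. For instance, $\eta_0\eta_1=\eta_0+\eta_1+2\eta_2$ or a similar cyclic permutation, which one reads off by listing the sixteen sums $a+b$. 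Finally $e_3=\eta_0\eta_1\eta_2$ is obtained by multiplying that product formula by $\eta_2$ and reusing the analogous formula for $\eta_j^2$ (where the $4$ pairs summing to $0$ contribute the constant $4$). This gives $e_3=-1$. Hence the minimal polynomial is $X^3+X^2-4X+1$.

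\textbf{Expressing $\eta_1$ and $\eta_2$ in $\eta_0$.} Compute $\eta_0^2$ directly. Each pair $\{a,-a\}$ contributes $4$ to the constant term, and the other sums fall into cosets. I expect a relation of the form $\eta_0^2=4+c_0\eta_0+c_1\eta_1+c_2\eta_2$. Combined with $\eta_1+\eta_2=-1-\eta_0$, this gives two linear equations for $\eta_1,\eta_2$ in terms of $1,\eta_0,\eta_0^2$. Solving them should yield \eqref{eq:period-coordinates}. As a check, the two expressions must sum to $-1-\eta_0$, which they do.

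\textbf{The identities \eqref{eq:abc}.} The first is immediate: $\beta+\gamma=\eta_0-\eta_2+\eta_1-\eta_0=\eta_1-\eta_2=\alpha$. For the others, substitute \eqref{eq:period-coordinates}:
\[
\alpha=5-3\eta_0-2\eta_0^2,\qquad \beta=3-\eta_0^2,\qquad \gamma=2-3\eta_0-\eta_0^2.
\]
Then expand $\gamma^2+\alpha\beta$, $\beta^2+\alpha\gamma$ and $\alpha\beta\gamma$ as polynomials in $\eta_0$, and reduce modulo $\eta_0^3=-\eta_0^2+4\eta_0-1$. Each reduction should collapse to the constant $13$.

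Some structural checks support these values. The product $\alpha\beta\gamma$ is, up to sign, the Vandermonde $(\eta_0-\eta_1)(\eta_1-\eta_2)(\eta_2-\eta_0)$, whose square is the discriminant $169$ of the cubic. So $\alpha\beta\gamma=\pm13$, and the sign is fixed numerically from $\eta_0=2\cos(2\pi/13)+2\cos(10\pi/13)$. Similarly, $\gamma^2+\alpha\beta=\gamma^2+\gamma\beta+\beta^2$ by the first identity, and this equals $\beta^2+\alpha\gamma$, so these two identities are the same statement. Moreover
\[
\beta^2+\beta\gamma+\gamma^2=\tfrac12\sum_{i<j}(\eta_i-\eta_j)^2=e_1^2-3e_2=1+12=13.
\]
This last observation gives a clean proof of the middle identities without any reduction.

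\textbf{Main obstacle.} The only real work is the bookkeeping of the product $\eta_0\eta_1$ (or $\eta_0^2$) over the sixteen residue sums. The sign of $\alpha\beta\gamma$ needs care, since the discriminant argument alone does not fix it. I would fix it by a numerical evaluation of the real numbers $\eta_j$.
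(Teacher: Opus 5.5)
Your proposal is correct and follows the paper's proof. The coset-sum count gives the multiplication table of the periods (the paper records $\eta_0^2=4+\eta_1+2\eta_2$ and its cyclic images), hence $e_1=-1$, $e_2=-4$, $e_3=-1$; the coordinates \eqref{eq:period-coordinates} come from $\eta_0^2$ together with $\eta_1+\eta_2=-1-\eta_0$ exactly as you describe, and \eqref{eq:abc} follows by substitution. Your identity $\beta^2+\beta\gamma+\gamma^2=e_1^2-3e_2=13$ is a neat shortcut, and direct reduction gives $\alpha\beta\gamma=+13$ outright, so the numerical sign check is optional.

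One slip: listing the sixteen sums actually gives $\eta_0\eta_1=\eta_0+2\eta_1+\eta_2$, not $\eta_0+\eta_1+2\eta_2$. With the correct entry, $\eta_2\cdot\eta_0\eta_1$ reduces to $4+5(\eta_0+\eta_1+\eta_2)=-1$.
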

\begin{proof}
Counting sums of elements in the cosets gives
\[
 \eta_0^2=4+\eta_1+2\eta_2,\quad
 \eta_1^2=4+2\eta_0+\eta_2,\quad
 \eta_2^2=4+\eta_0+2\eta_1.
\]
Together with $\eta_0+\eta_1+\eta_2=-1$, these yield
$\eta_0\eta_1+\eta_0\eta_2+\eta_1\eta_2=-4$ and
$\eta_0\eta_1\eta_2=-1$. They also give~\eqref{eq:period-coordinates}. Substitution proves~\eqref{eq:abc}.
\end{proof}

Let
\begin{equation}\label{eq:sT}
 \begin{split}
 s_0&=\sin(\pi/13)\sin(5\pi/13),\quad
 s_1=\sin(2\pi/13)\sin(3\pi/13),\\
 s_2&=\sin(4\pi/13)\sin(6\pi/13),\qquad
 T_j(\tau)=\prod_{n\ge1}\prod_{a\in C_j}(1-\zeta^a q^n).
 \end{split}
\end{equation}
We write $W\tau=-1/(13\tau)$.

\begin{lemma}\label{lem:fricke-image}
For $j=0,1,2$,
\begin{equation}\label{eq:fricke-image}
 G_j(W\tau)=\frac{q^{-1/6}}{4s_jT_j(\tau)}.
\end{equation}
The normalization of the multiplier in this formula does not depend on a special property of the prime 13.
\end{lemma}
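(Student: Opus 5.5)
The plan is to transform each generalized eta function $E_g$ separately under $\tau\mapsto-1/(13\tau)$ and then multiply the two factors that make up $G_j$. Write $\tau'=W\tau=-1/(13\tau)$, so that $q'=e^{2\pi i\tau'}$ and $13\tau'=-1/\tau$.

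\textbf{Step 1: the transformation of one factor.} Express $E_g$ through the Jacobi triple product. With $Q=q^{13}$ and $z=q^g$,
\[
E_g(\tau)=q^{\frac{13}{2}B_2(g/13)}\,(Q^{g/13},Q^{1-g/13};Q)_\infty .
\]
This is, up to a root of unity and a factor $(Q;Q)_\infty$, the Klein form or Siegel function $g_{(g/13,0)}(13\tau)$. Equivalently, $E_g(\tau)\eta(13\tau)$ is a theta function $\vartheta_1$ evaluated at $z=g\tau$ with modulus $13\tau$. Under $13\tau\mapsto-1/(13\tau')$ the characteristic $(g/13,0)$ is sent to $(0,g/13)$. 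Using Yang's formula for $E_{(g,h)}$ at level $N$, or directly the inversion $\vartheta_1(z/\tau\mid-1/\tau)=-i\sqrt{-i\tau}\,e^{\pi i z^2/\tau}\vartheta_1(z\mid\tau)$, I will obtain
\[
E_g(W\tau)=-i\,e^{\pi i g/13}\,q^{1/12}\,(\zeta^{g}q,\zeta^{-g}q;q)_\infty\,(1-\zeta^{-g})\big/(q;q)_\infty\cdot(\text{const}).
\]
The factor $1-\zeta^{-g}$ arises from the $n=0$ term of $(\zeta^g;q)_\infty$.

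\textbf{Step 2: fixing the constant.} The constant combines as
\[
-i\,e^{\pi i g/13}(1-\zeta^{-g})=-i\left(e^{\pi i g/13}-e^{-\pi i g/13}\right)=2\sin(\pi g/13).
\]
So $E_g(W\tau)=2\sin(\pi g/13)\,q^{1/12}\prod_{n\ge1}(1-\zeta^g q^n)(1-\zeta^{-g}q^n)$. The weight factors $\sqrt{-i\tau}$ cancel against those coming from $\eta(13\tau)$. The Gaussian exponentials $e^{\pi i z^2/\tau}$ are exactly what converts the prefactor $q^{\frac{13}{2}B_2(g/13)}$ into $q^{1/12}$. I will check this exponent bookkeeping explicitly, because it is the point where errors enter.

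\textbf{Step 3: multiplying the factors.} For $G_j=(E_aE_b)^{-1}$ with $\{a,b\}=C_j\cap[1,6]$, the product of the two expressions gives
\[
G_j(W\tau)=\frac{q^{-1/6}}{4s_jT_j(\tau)} .
\]
The exponent $q^{-1/6}$ comes from $2\cdot\frac1{12}$, and $s_j=\sin(\pi a/13)\sin(\pi b/13)$. None of these steps uses any property of $13$ beyond $g\not\equiv0$. This accounts for the remark that the normalization of the multiplier is independent of the prime.

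\textbf{Main obstacle.} The hard part is the sign and the root of unity: one must track the branch of $\sqrt{-i\tau}$ and the phase $e^{\pi i g/13}$ precisely enough to confirm that the constant is the positive real number $2\sin(\pi g/13)$. I will verify this by specializing to $\tau=i/\sqrt{13}$, the fixed point of $W$. There $q$ is real and positive, both sides are real, and $E_g>0$, so the sign is forced to be positive.
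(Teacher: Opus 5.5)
Your proposal is correct and follows essentially the same route as the paper. The paper writes $E_g(\tau)=-g_{(g/13,0)}(13\tau)$, applies the Siegel-function inversion to get $E_g(W\tau)=2i\varepsilon(S)\sin(\pi g/13)\,q^{1/12}\prod_{n\ge1}(1-\zeta^gq^n)(1-\zeta^{-g}q^n)$, forces $\varepsilon(S)=-i$ by positivity on the imaginary axis (your fixed-point check at $\tau=i/\sqrt{13}$ is the same argument at one point), and then multiplies two factors. The only slip is the stray $1/(q;q)_\infty$ in your Step~1 display: the $(q;q)_\infty$ from $\vartheta_1(g/13\mid\tau)$ cancels against $\eta(\tau)=q^{1/24}(q;q)_\infty$, so your Step~2 formula, which has no such factor, is the correct one.
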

\begin{proof}
For a rational row vector $a\notin\ZZ^2$, put $z=e^{2\pi i(a_1\tau+a_2)}$ and use the Siegel function $g_a$ with product convention
\[
 g_{(a_1,a_2)}(\tau)
 =-q^{B_2(a_1)/2}e^{\pi i a_2(a_1-1)}(1-z)
 \prod_{n\ge1}(1-q^nz)(1-q^n/z).
\]
Then $E_g(\tau)=-g_{(g/13,0)}(13\tau)$. The Siegel transformation formula~\cite{KL} gives
$g_a(S\tau)=\varepsilon(S)g_{aS}(\tau)$, where
$S=\left(\begin{smallmatrix}0&-1\\1&0\end{smallmatrix}\right)$ and the multiplier is independent of $a$. Consequently
\[
 E_g(W\tau)=2i\varepsilon(S)\sin(\pi g/13)\,
 q^{1/12}\prod_{n\ge1}(1-\zeta^gq^n)(1-\zeta^{-g}q^n).
\]
On the positive imaginary axis, both $E_g(W\tau)$ and the product following the sine are positive real numbers. Since $\varepsilon(S)$ has absolute value one, this forces $\varepsilon(S)=-i$. Multiplying two such formulas and taking the reciprocal proves~\eqref{eq:fricke-image}.
\end{proof}

\begin{lemma}\label{lem:gauss}
The constants in~\eqref{eq:sT} satisfy
\begin{equation}\label{eq:gauss}
 4s_0\alpha=4s_1\beta=4s_2\gamma=\sqrt{13}.
\end{equation}
\end{lemma}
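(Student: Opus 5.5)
The plan is to prove the three identities of \eqref{eq:gauss} by squaring, which reduces each one to an identity among the periods $\eta_j$ that Lemma~\ref{lem:periods} can check. The signs are then fixed separately.

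\textbf{Step 1: express $(4s_j)^2$ through the periods.} Since $2\sin(\pi a/13)=|1-\zeta^a|$, we have
\[
(4s_0)^2=|1-\zeta|^2|1-\zeta^5|^2=(2-\zeta-\zeta^{-1})(2-\zeta^5-\zeta^{-5}).
\]
Expanding gives
\[
4-2(\zeta^{\pm1}+\zeta^{\pm5})+(\zeta^{\pm4}+\zeta^{\pm6})=4-2\eta_0+\eta_2 .
\]
The same computation for the other two cosets gives $(4s_1)^2=4-2\eta_1+\eta_0$ and $(4s_2)^2=4-2\eta_2+\eta_1$.

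\textbf{Step 2: verify the squared identities.} We must show
\[
(4-2\eta_0+\eta_2)\alpha^2=13,\quad (4-2\eta_1+\eta_0)\beta^2=13,\quad (4-2\eta_2+\eta_1)\gamma^2=13 .
\]
I would prove the first by substituting \eqref{eq:period-coordinates}. This writes both factors as polynomials in $\eta_0$ of degree at most~$2$. I then reduce the product modulo $\eta_0^3+\eta_0^2-4\eta_0+1$ and check that it equals $13$.

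The other two identities should follow by Galois conjugation rather than recomputation. The automorphism $\sigma\colon\zeta\mapsto\zeta^2$ permutes the cosets cyclically, $C_0\to C_1\to C_2\to C_0$. So $\sigma$ carries $4-2\eta_0+\eta_2$ to $4-2\eta_1+\eta_0$, and carries $\alpha=\eta_1-\eta_2$ to $\eta_2-\eta_0=-\beta$. Since the identity involves $\alpha^2$, the sign is harmless, and the second identity follows. Applying $\sigma$ once more gives the third.

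As an alternative to the polynomial reduction in the first identity, one could combine \eqref{eq:abc} with the quadratic relations $\eta_j^2=\dots$ from the proof of Lemma~\ref{lem:periods}.

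\textbf{Step 3: fix the signs.} Each $s_j$ is positive, because all the angles $\pi a/13$ with $1\le a\le 6$ lie in $(0,\pi/2)$. It remains to see that $\alpha,\beta,\gamma>0$, and a crude numerical evaluation suffices:
\[
\eta_0=2\cos\tfrac{2\pi}{13}+2\cos\tfrac{10\pi}{13}\approx0.27,\quad
\eta_1=2\cos\tfrac{4\pi}{13}+2\cos\tfrac{6\pi}{13}\approx1.38,\quad
\eta_2=2\cos\tfrac{8\pi}{13}+2\cos\tfrac{12\pi}{13}\approx-2.65 .
\]
These give $\alpha\approx4.03$, $\beta\approx2.92$ and $\gamma\approx1.10$, all positive. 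The error margins are large, so elementary bounds on the cosines make this rigorous. Taking positive square roots in Step~2 then yields \eqref{eq:gauss}.

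The only step with real content is the reduction in Step~2. It is routine but error-prone, and I would cross-check it against the relations $\alpha\beta\gamma=13$ and $\gamma^2+\alpha\beta=13$ from \eqref{eq:abc}.
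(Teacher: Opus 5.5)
Your proof is correct, but it takes a different route from the paper's. The paper never squares. It identifies $4s_0=-g_1$, $4s_1=g_2$, $4s_2=g_0$ with the partial quadratic Gauss sums $g_j=\sum_{a\in C_j}\chi(a)\zeta^a$. It then verifies in $\ZZ[\zeta]$ that $-g_1\alpha=g_2\beta=g_0\gamma=\mathfrak g$ with $\mathfrak g^2=13$. After that it needs only one sign check ($\eta_1>0>\eta_2$ gives $4s_0\alpha>0$) to get $\mathfrak g=\sqrt{13}$ for all three products at once.

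You instead pass to $(4s_j)^2$, which lies in the cubic field $K$. The whole verification is then a single reduction modulo $X^3+X^2-4X+1$, and it does close: $(1-\eta_0+\eta_0^2)(17-2\eta_0-3\eta_0^2)=13$. The automorphism $\zeta\mapsto\zeta^2$ then transports this to the other two cosets, and your conjugation bookkeeping ($\alpha\mapsto-\beta\mapsto\gamma$ up to sign) is right.

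Your route can be checked by hand in a degree-three field, with no characters and no computation in the degree-twelve ring $\ZZ[\zeta]$. The paper's route shows that the three products are one and the same Gauss sum. That is why a single sign suffices there, and the same mechanism is reused in Theorem~\ref{thm:membership}.

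Since squaring loses the signs, you need $\alpha,\beta,\gamma>0$ separately. This can be done without numerical estimates:
\begin{itemize}
\item $\eta_1>0>\eta_2$ follows directly from the cosine expressions.
\item $\eta_0=2\cos(2\pi/13)-2\cos(3\pi/13)>0$, so $\alpha,\beta>0$.
\item Then $\gamma>0$ follows from $\alpha\beta\gamma=13$ in \eqref{eq:abc}.
\end{itemize}
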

\begin{proof}
Let $\chi$ be the quadratic character modulo 13, and put
\[
 g_j=\sum_{a\in C_j}\chi(a)\zeta^a,\qquad
 \mathfrak g=\sum_{a=1}^{12}\chi(a)\zeta^a.
\]
The identity $2\sin A\sin B=\cos(A-B)-\cos(A+B)$ gives
$4s_0=-g_1$, $4s_1=g_2$, and $4s_2=g_0$.
Multiplication in $\ZZ[\zeta]$, reducing by
$\Phi_{13}(Z)=1+Z+\cdots+Z^{12}$, gives
\[
 -g_1\alpha=g_2\beta=g_0\gamma=\mathfrak g,
 \qquad \mathfrak g^2=13.
\]
These are finite polynomial identities; the accompanying verifier checks each directly in the cyclotomic ring. To fix the sign, observe that $\eta_1>0$ and $\eta_2<0$ from their cosine expressions, so $4s_0\alpha>0$. Thus $\mathfrak g=\sqrt{13}$.
\end{proof}

In particular $S_{13}=\frac14\left(\begin{smallmatrix}s_2^{-1}&s_1^{-1}\\s_0^{-1}&-s_2^{-1}\end{smallmatrix}\right)$.
The cubic polynomial in Lemma~\ref{lem:periods} is irreducible over $\QQ$, and its period field is the fixed field of $C_0$. It follows that $[K:\QQ]=3$. If a nonzero entry such as $\alpha/\sqrt{13}$ belonged to $K$, then $\sqrt{13}\in K$, which is impossible by the tower law. Since both $K$ and $\sqrt{13}=\mathfrak g$ belong to $\QQ(\zeta)^+$, their compositum is that degree-six real cyclotomic field. This proves the field assertions made in the introduction.

\subsection{Expanding the two sides}\label{subsec:expansion}
Put $Q=e^{-2\pi i/(13\tau)}$. In components, \eqref{eq:main} asserts
\begin{align*}
 Q^{-1/6}P_0(Q)&=\frac{q^{-1/6}}{\sqrt{13}}
                  \bigl(\gamma P_0(q)+\beta P_1(q)\bigr),\\
 Q^{-1/6}P_1(Q)&=\frac{q^{-1/6}}{\sqrt{13}}
                  \bigl(\alpha P_0(q)-\gamma P_1(q)\bigr).
\end{align*}
Here $Q^{-1/6}=e^{\pi i/(39\tau)}$, in accordance with the fixed
exponential convention. Since $Q$ and $q$ are different nomes, comparison
of coefficients first requires the left side to be expressed in $q$.
Lemmas~\ref{lem:fricke-image} and~\ref{lem:gauss} give, independently of
Theorem~\ref{thm:main},
\[
 H(W\tau)=\frac{q^{-1/6}}{\sqrt{13}}
 \begin{pmatrix}\alpha T_0(\tau)^{-1}\\\beta T_1(\tau)^{-1}\end{pmatrix}.
\]
The defining products and the quartics~\eqref{eq:quartic} give
\begin{align*}
 P_0(q)&=1+q+q^2+q^3+q^4+2q^5+O(q^6),\\
 P_1(q)&=1+q^2+q^3+q^4+q^5+O(q^6),\\
 T_j(\tau)^{-1}&=1+\eta_jq+q^2+q^3+q^4+(1+\eta_j)q^5+O(q^6)
 \quad(j=0,1).
\end{align*}
These coefficients can be obtained by finite multiplication in
$\ZZ[\eta_0]$. For example, the coefficient of $q^2$ in $T_j^{-1}$ is
$\eta_j^2+\eta_j-2-\eta_{j-1}=1$, by the period relations
(subscripts modulo three). The same relations give
$\alpha\eta_0=\gamma$ and $\beta\eta_1=\alpha$.
Together with $\alpha=\beta+\gamma$, they show that both
$H(W\tau)$ and $S_{13}H(\tau)$ have the expansion
\[
 \frac{q^{-1/6}}{\sqrt{13}}
 \begin{pmatrix}
 \alpha+\gamma q+\alpha(q^2+q^3+q^4)+(\alpha+\gamma)q^5+O(q^6)\\
 \beta+\alpha q+\beta(q^2+q^3+q^4)+(\alpha+\beta)q^5+O(q^6)
 \end{pmatrix}.
\]
This calculation makes the first matching coefficients explicit; it is not
by itself a proof of the full transformation. The divisor and
scalar-lifting arguments in Sections~4--6 establish the identity for all
$\tau\in\HH$.

\section{Modularity and cusp divisors}
We now specify precisely the modular functions used in the proof. For odd $N$, Yang's criterion~\cite[Proposition 2]{Yang09}, also~\cite[Corollary 3]{Yang04}, states that
\begin{equation}\label{eq:yang}
 \sum_g r_g\equiv0\pmod{12},\qquad
 \sum_g g^2r_g\equiv0\pmod N
\end{equation}
are sufficient for $\prod_g E_g^{r_g}$ to be a modular function on $\Gamma_1(N)$.
For $t=E_2E_3/(E_4E_6)$ these two sums are $0$ and $-39$; for
$G_1^6=E_2^{-6}E_3^{-6}$ they are $-12$ and $-78$. Hence both functions are invariant on $\Gamma_1(13)$, are nonvanishing on $\HH$, and have divisors supported at cusps.

There are two families of six cusps. The cusps represented by $a/c$ with $13\mid c$ have width one and are indexed by the nonzero numerator $a$ modulo sign; write them $c_a$ for $1\le a\le6$, with $c_1=\infty$. The other six have width 13. In local parameters on $X_1(13)$ the generalized eta orders are~\cite[Proposition 3]{Yang09}
\begin{equation}\label{eq:cusp-order}
 \ord_{a/c}(E_g)=
 \begin{cases}
 \dfrac{13}{2}B_2(\{ag/13\}),&13\mid c,\\[2pt]
 \dfrac1{12},&13\nmid c.
 \end{cases}
\end{equation}
Here the orders of individual $E_g$ can be fractional; their linear combinations give the integral orders of the invariant products just established.

For completeness, the widths and the factors in~\eqref{eq:cusp-order} can also be seen directly. If
$\sigma=\left(\begin{smallmatrix}a&b\\c&d\end{smallmatrix}\right)$ and $e=\gcd(c,13)$, factor
\[
 \begin{pmatrix}13&0\\0&1\end{pmatrix}\sigma
 =\sigma_1\begin{pmatrix}e&B\\0&13/e\end{pmatrix},\qquad \sigma_1\in\mathrm{SL}_2(\ZZ).
\]
The first component of $(g/13,0)\sigma_1$ is $ga/e$, and the transformed variable is $(e^2\tau+eB)/13$. The Siegel product therefore gives~\eqref{eq:cusp-order} after using the cusp width $13/e$.

Set
\[
 D=[c_2]+[c_3],\qquad \mathcal C=\text{the sum of all twelve cusps}.
\]
The required orders are
\begin{equation}\label{eq:order-table}
 \begin{array}{c|rrrrrr|c}
 &c_1&c_2&c_3&c_4&c_5&c_6&\text{each other cusp}\\\hline
 t&1&-1&-1&0&1&0&0\\
 G_1^6&-1&5&5&-1&-1&-1&-1
 \end{array}
\end{equation}
Thus
\begin{equation}\label{eq:divisors}
 \divisor(t)=[c_1]+[c_5]-D,\qquad
 \divisor(G_1^6)=-\mathcal C+6D.
\end{equation}
In particular $t$ has degree two.

The matrix $W=\left(\begin{smallmatrix}0&-1\\13&0\end{smallmatrix}\right)$ normalizes $\Gamma_1(13)$, since
\[
 W\begin{pmatrix}a&b\\13c&d\end{pmatrix}W^{-1}
 =\begin{pmatrix}d&-c\\-13b&a\end{pmatrix}.
\]
It interchanges the two cusp families. By Lemmas~\ref{lem:fricke-image} and~\ref{lem:gauss},
\begin{equation}\label{eq:tW}
 \ttil:=\frac{T_1}{T_2}=\frac{\beta}{\gamma}\,t\circ W.
\end{equation}
Consequently $\ttil$ is another modular unit of degree two. Its divisor is supported on the other cusp family, disjoint from the support of $\divisor(t)$.

\section{A five-coefficient identity}
Define
\begin{equation}\label{eq:F}
 F=\gamma\beta\ttil+\gamma\alpha\ttil t+\beta^2t-\beta\gamma.
\end{equation}
This is an invariant modular function on $\Gamma_1(13)$, holomorphic on $\HH$. Its polar divisor has degree at most four: at the first cusp family $\ord(\ttil)=0$, so only the two simple poles of $t$ contribute; at the second family the two simple poles of $\ttil$ contribute. This bounds poles of $t\ttil$ as well, because the two supports are disjoint.

Put
\begin{equation}\label{eq:Xi}
 \Xi=FT_2P_1
 =\gamma\beta(T_1-T_2)P_1
  +\gamma\alpha T_1qP_2+\beta^2T_2qP_2.
\end{equation}
Every coefficient lies in $\ZZ[\eta_0]$. Indeed, with subscripts modulo three,
\begin{equation}\label{eq:quartic}
 \prod_{a\in C_j}(1-\zeta^a X)
 =1-\eta_jX+(2+\eta_{j-1})X^2-\eta_jX^3+X^4,
\end{equation}
and $T_j$ is the product of these quartics at $X=q^n$.

The following table gives all coefficients needed for the proof. A triple $(a,b,c)$ means $a+b\eta_0+c\eta_0^2$. Set
$U=(T_1-T_2)P_1$, $V=T_1qP_2$, and $Z=T_2qP_2$.
\begin{equation}\label{eq:coefficient-table}
\begin{array}{c|rrr}
 n &[q^n]U &[q^n]V &[q^n]Z\\\hline
 0&(0,0,0)&(0,0,0)&(0,0,0)\\
 1&(-5,3,2)&(1,0,0)&(1,0,0)\\
 2&(-7,6,3)&(-2,2,1)&(3,-1,-1)\\
 3&(-22,15,9)&(0,3,1)&(7,-3,-2)\\
 4&(-41,30,17)&(-3,7,3)&(14,-5,-4)
\end{array}
\end{equation}
Using
\[
 \alpha=(5,-3,-2),\quad\beta=(3,0,-1),\quad\gamma=(2,-3,-1),
 \qquad \eta_0^3=-\eta_0^2+4\eta_0-1,
\]
each row gives
$\gamma\beta[q^n]U+\gamma\alpha[q^n]V+\beta^2[q^n]Z=0$.
Only the factors with $n\le4$ in~\eqref{eq:quartic} can contribute, so this is an explicit finite multiplication. The supplementary script checks the table and, redundantly, all coefficients of $\Xi$ through $q^{24}$.

Since $T_2P_1=1+O(q)$, we have $\ord_\infty(F)\ge5$. The cusp width is one. A nonzero meromorphic function on a compact curve has equal degrees of its zero and pole divisors. The bound of four on the polar degree therefore implies $F=0$. By~\eqref{eq:tW} and~\eqref{eq:F},
\begin{equation}\label{eq:projective}
 t(W\tau)=\frac{\gamma-\beta t(\tau)}{\beta+\alpha t(\tau)}.
\end{equation}
This proves the projective transformation, but the common scalar remains to be determined.

\section{Lifting the projective transformation}
\begin{proof}[Proof of Theorem~\ref{thm:main}]
The numerator and denominator in~\eqref{eq:projective} have no common zero, since $\beta^2+\alpha\gamma=13$. At a pole of $t$ their ratio has a finite value. The polar divisor of $t\circ W$ is $W(D)$, where the involution identifies pushforward and pullback. It follows that
\begin{equation}\label{eq:denominator-divisor}
 \divisor(\beta+\alpha t)=W(D)-D.
\end{equation}
In particular this function does not vanish on $\HH$.

Now define the nonvanishing holomorphic function
\[
 R(\tau)=\frac{\sqrt{13}\,G_1(W\tau)}{(\beta+\alpha t(\tau))G_1(\tau)}.
\]
Its sixth power is an invariant modular function on $\Gamma_1(13)$, by the modularity of $G_1^6$ and $t$, and the normalization of the group by $W$. Equations~\eqref{eq:divisors} and~\eqref{eq:denominator-divisor} give
\begin{align*}
 \divisor(R^6)
 &=(-\mathcal C+6W(D))-(-\mathcal C+6D)-6(W(D)-D)\\
 &=0.
\end{align*}
Hence $R^6$ is constant on $X_1(13)$. Because $R$ is holomorphic on the connected domain $\HH$, it is constant as well. As $\tau\to i\infty$,
\[
 G_1(W\tau)\sim\frac{\beta}{\sqrt{13}}q^{-1/6},\qquad
 G_1(\tau)\sim q^{-1/6},\qquad t(\tau)\to0,
\]
by~\eqref{eq:fricke-image} and~\eqref{eq:gauss}. Therefore $R=1$, and
\[
 G_1(W\tau)=\frac{\beta+\alpha t}{\sqrt{13}}G_1
 =\frac{\alpha G_0-\gamma G_1}{\sqrt{13}}.
\]
Using~\eqref{eq:projective} gives
$G_2(W\tau)=(\gamma-\beta t)G_1/\sqrt{13}$, and adding these two identities yields
$G_0(W\tau)=(\gamma G_0+\beta G_1)/\sqrt{13}$.
This is~\eqref{eq:main}. Finally~\eqref{eq:abc} gives $S_{13}^2=I$, and~\eqref{eq:normalization} gives the translation law.
\end{proof}

\begin{corollary}\label{cor:twisted}
Let $\ell_0=\gamma P_0+\beta P_1$, $\ell_1=\alpha P_0-\gamma P_1$, and $\ell_2=-\beta P_0+\alpha P_1$. Then
\[
 T_0\ell_0=\alpha,\qquad T_1\ell_1=\beta,\qquad T_2\ell_2=\gamma.
\]
\end{corollary}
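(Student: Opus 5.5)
The corollary follows by comparing two expressions for the Fricke images $G_j(W\tau)$. One comes from Lemma~\ref{lem:fricke-image} together with Lemma~\ref{lem:gauss}. The other comes from Theorem~\ref{thm:main} and the relation $G_0=G_1+G_2$ of Lemma~\ref{lem:theta}.

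First, combine \eqref{eq:fricke-image} and \eqref{eq:gauss}. Since $4s_0=\sqrt{13}/\alpha$, $4s_1=\sqrt{13}/\beta$ and $4s_2=\sqrt{13}/\gamma$, we get
\[
 G_0(W\tau)=\frac{\alpha q^{-1/6}}{\sqrt{13}\,T_0},\qquad
 G_1(W\tau)=\frac{\beta q^{-1/6}}{\sqrt{13}\,T_1},\qquad
 G_2(W\tau)=\frac{\gamma q^{-1/6}}{\sqrt{13}\,T_2}.
\]

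Second, write the right-hand sides of \eqref{eq:main} in terms of the $P_j$ using \eqref{eq:normalization}, which gives $G_0=q^{-1/6}P_0$ and $G_1=q^{-1/6}P_1$. Then
\[
 G_0(W\tau)=\frac{q^{-1/6}}{\sqrt{13}}\,\ell_0,\qquad
 G_1(W\tau)=\frac{q^{-1/6}}{\sqrt{13}}\,\ell_1.
\]
For the third component, Lemma~\ref{lem:theta} holds identically in $\tau$, so it holds at $W\tau$ as well, and $G_2(W\tau)=G_0(W\tau)-G_1(W\tau)$. Using $\alpha=\beta+\gamma$ from \eqref{eq:abc}, this difference is
\[
 \frac{q^{-1/6}}{\sqrt{13}}\bigl((\gamma-\alpha)P_0+(\beta+\gamma)P_1\bigr)
 =\frac{q^{-1/6}}{\sqrt{13}}\bigl(-\beta P_0+\alpha P_1\bigr)
 =\frac{q^{-1/6}}{\sqrt{13}}\,\ell_2.
\]
The same expression for $G_2(W\tau)$ can also be read off directly from the proof of the theorem, namely $G_2(W\tau)=(\gamma-\beta t)G_1/\sqrt{13}$, after substituting $t=qP_2/P_1$ and $qP_2=P_0-P_1$.

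Finally, equate the two expressions for each $G_j(W\tau)$ and cancel the common nonzero factor $q^{-1/6}/\sqrt{13}$. This gives $\alpha/T_0=\ell_0$, $\beta/T_1=\ell_1$ and $\gamma/T_2=\ell_2$. Each $T_j$ is a convergent nonvanishing product on $\HH$, so multiplying through by $T_j$ yields the three stated identities.

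No step is a genuine obstacle, since the work is in Theorem~\ref{thm:main}. The only points needing care are the third identity, which rests on the linear relation of Lemma~\ref{lem:theta}, and keeping the normalizations of \eqref{eq:normalization} straight. For the third identity, the relation $\alpha=\beta+\gamma$ from \eqref{eq:abc} is exactly what is used.
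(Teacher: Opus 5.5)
Your proof is correct and takes essentially the paper's approach. The paper's one-line proof also equates the Fricke images from Lemma~\ref{lem:fricke-image} and \eqref{eq:gauss} with the transformed components from Theorem~\ref{thm:main}, where $G_2(W\tau)=(\gamma-\beta t)G_1/\sqrt{13}$ comes from its proof; you spell out that third component explicitly via $G_2=G_0-G_1$ and $\alpha=\beta+\gamma$.
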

\begin{proof}
Compare each transformed component with~\eqref{eq:fricke-image} and use~\eqref{eq:gauss}.
\end{proof}

\begin{remark}
The projective transformation is compatible with the geometry of $X_1(13)$. This curve has genus two: its effective index is 84, it is torsion-free, and it has twelve cusps. Thus the degree-two function $t$ is a hyperelliptic coordinate. An automorphism of the curve acts fractionally linearly on such a coordinate. The argument above determines this action explicitly and then supplies the additional scalar information needed for the eta-product vector.
\end{remark}

\section{What holds at every cubic-residue prime}
Let $p\equiv1\pmod3$ be prime, put $m=(p-1)/6$, and let $C_j$ be the three cubic-residue cosets. Write
\[
 C_j^+=\{a\in C_j:1\le a<p/2\},\qquad
 e_j=\frac p2\sum_{a\in C_j^+}B_2(a/p),
\]
\[
 G_j=q^{-e_j}\prod_{\substack{n>0\\n\bmod p\in C_j}}(1-q^n)^{-1}.
\]
Here $|C_j^+|=m$. Define
\[
 s_j=\prod_{a\in C_j^+}\sin(\pi a/p),\qquad
 T_j=\prod_{n\ge1}\prod_{a\in C_j}(1-\zeta_p^a q^n),\qquad
 \zeta_p=e^{2\pi i/p}.
\]

\begin{proposition}\label{prop:general}
For every such prime,
\begin{equation}\label{eq:general}
 \sum_{j=0}^2 e_j=\frac{1-p}{24},\qquad
 G_0G_1G_2=\frac{\eta(p\tau)}{\eta(\tau)},\qquad
 G_j\left(-\frac1{p\tau}\right)=\frac{q^{-m/12}}{2^ms_jT_j(\tau)}.
\end{equation}
In particular
\[
 (G_0G_1G_2)\left(-\frac1{p\tau}\right)
 =\frac{1}{\sqrt p\,(G_0G_1G_2)(\tau)}.
\]
\end{proposition}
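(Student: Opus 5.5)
Each of the four claims will be proved in turn, reusing the level-13 arguments with $13$ replaced by $p$.

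\emph{The exponent sum.} Since $-1$ is a cube, each $C_j$ is symmetric under $a\mapsto -a$. Hence the sets $C_j^+$ partition $\{1,\dots,(p-1)/2\}$, and $|C_j^+|=|C_j|/2=m$. Therefore
\[
\sum_j e_j=\frac p2\sum_{a=1}^{(p-1)/2}B_2(a/p).
\]
Because $B_2(1-x)=B_2(x)$, this half-sum equals $\tfrac12\sum_{a=1}^{p-1}B_2(a/p)$. The distribution relation $\sum_{a=0}^{p-1}B_2(a/p)=B_2(0)/p$ then gives
\[
\sum_{a=1}^{p-1}B_2(a/p)=\frac{1}{6p}-\frac16=\frac{1-p}{6p}.
\]
Multiplying by $p/4$ gives $(1-p)/24$.

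\emph{The product formula.} As in Lemma~\ref{lem:theta}, the cosets partition the nonzero residues, so
\[
\prod_j\prod_{n\bmod p\in C_j}(1-q^n)^{-1}=\frac{(q^p;q^p)_\infty}{(q;q)_\infty}.
\]
The prefactor is $q^{-\sum e_j}=q^{(p-1)/24}$, which is exactly the ratio of the prefactors of $\eta(p\tau)$ and $\eta(\tau)$.

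\emph{The Fricke image.} We repeat the proof of Lemma~\ref{lem:fricke-image} verbatim with $13$ replaced by $p$. For $1\le g<p/2$ put $E_g=q^{\frac p2B_2(g/p)}\prod_{n\equiv\pm g}(1-q^n)$, so that $E_g(\tau)=-g_{(g/p,0)}(p\tau)$. Applying $S$ gives
\[
E_g(-1/(p\tau))=2\sin(\pi g/p)\,q^{1/12}\prod_{n\ge1}(1-\zeta_p^gq^n)(1-\zeta_p^{-g}q^n).
\]
The multiplier $\varepsilon(S)=-i$ is fixed exactly as before, by positivity on the imaginary axis, and this does not depend on $p$. Since $G_j=\prod_{g\in C_j^+}E_g^{-1}$ and $C_j=C_j^+\cup(-C_j^+)$, multiplying the $m$ formulas and inverting gives
\[
G_j(-1/(p\tau))=\frac{q^{-m/12}}{2^m s_jT_j(\tau)}.
\]
The only care needed is the phase bookkeeping. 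I will verify that $e^{\pi i a_2(a_1-1)}$ evaluated at $aS=(0,-g/p)$, combined with $(1-z)$ at $z=\zeta_p^{-g}$, gives $2i\sin(\pi g/p)$ times a real constant. This is the computation already done at level 13, and it is uniform in $g$.

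\emph{The consequence.} Multiplying the three Fricke images gives
\[
(G_0G_1G_2)(W\tau)=\frac{q^{-(p-1)/24}}{2^{3m}\,s_0s_1s_2\,\prod_j T_j}.
\]
Two identities finish the proof.

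First, $\prod_jT_j=\prod_{n\ge1}\prod_{a=1}^{p-1}(1-\zeta_p^aq^n)=(q^p;q^p)_\infty/(q;q)_\infty$. Combined with the product formula, this gives $q^{-(p-1)/24}/\prod_jT_j=1/(G_0G_1G_2)(\tau)$.

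Second, the constants must satisfy $2^{3m}s_0s_1s_2=\sqrt p$. Here $s_0s_1s_2=\prod_{a=1}^{(p-1)/2}\sin(\pi a/p)$. The classical formula $\prod_{a=1}^{p-1}\sin(\pi a/p)=p/2^{p-1}$, together with symmetry, gives $s_0s_1s_2=\sqrt p/2^{(p-1)/2}$. Since $3m=(p-1)/2$, the identity holds.

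Alternatively, one can verify the displayed consequence directly from $\eta(-1/\tau)=\sqrt{-i\tau}\,\eta(\tau)$, which provides a consistency check. I expect the main obstacle to be the careful sign and phase tracking in the Siegel transformation for general $g$. Everything else is elementary.
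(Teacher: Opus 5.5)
Your proposal is correct and follows the paper's proof for the exponent sum, the eta-quotient identity, and the Fricke images (Lemma~\ref{lem:fricke-image} with $p$ in place of $13$, with $\varepsilon(S)=-i$ fixed by positivity on the imaginary axis). The only variation is the final consequence: the paper gets it in one line from $G_0G_1G_2=\eta(p\tau)/\eta(\tau)$ and $\eta(-1/\tau)=\sqrt{-i\tau}\,\eta(\tau)$, whereas your main route multiplies the three Fricke images and uses $\prod_{a=1}^{p-1}\sin(\pi a/p)=p/2^{p-1}$, which is also valid and doubles as a consistency check on the constants $2^m s_j$.
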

\begin{proof}
The half-cosets partition $\{1,\ldots,(p-1)/2\}$, and the Bernoulli multiplication formula gives
$\sum_{a=1}^{(p-1)/2}B_2(a/p)=(1-p)/(12p)$.
This proves the exponent sum; multiplying the defining products proves the eta quotient. Its transformation follows from $\eta(-1/\tau)=\sqrt{-i\tau}\eta(\tau)$. The proof of Lemma~\ref{lem:fricke-image}, with $p$ in place of 13 and $m$ factors, proves the final formula. The same positivity argument determines $\varepsilon(S)=-i$ at every prime.
\end{proof}

\begin{theorem}\label{thm:membership}
Let $K_p$ be the cyclic cubic subfield of $\QQ(\zeta_p)$. Then
\[
 \nu_j:=\frac{\sqrt p}{2^ms_j}\in K_p\qquad(j=0,1,2)
\]
for every prime $p\equiv1\pmod3$.
\end{theorem}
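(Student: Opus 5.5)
The plan is to show first that $\nu_j$ lies in $\QQ(\zeta_p)$ and that its square lies in $K_p$. A norm computation then rules out the only remaining alternative, namely that $\nu_j$ is a $K_p$-multiple of a square root of $\pm p$.

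\emph{Step 1: the square of $2^ms_j$.} Let $H\subset(\ZZ/p\ZZ)^\times$ be the cubic residues, so that $C_j$ is a coset of $H$ and $K_p$ is the fixed field of $\{\sigma_h:h\in H\}$. Since $|1-\zeta_p^a|=2\sin(\pi a/p)$ for $1\le a<p/2$, and $C_j=C_j^+\cup(-C_j^+)$ because $-1\in H$, we get
\[
 N_j:=\prod_{a\in C_j}(1-\zeta_p^a)=\prod_{a\in C_j^+}|1-\zeta_p^a|^2=(2^ms_j)^2 .
\]
Each $\sigma_h$ with $h\in H$ permutes $C_j$, so $N_j\in K_p$. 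Moreover $N_0N_1N_2=\Phi_p(1)=p$, and the three $N_j$ are the Galois conjugates of one another over $\QQ$. Hence $N_{K_p/\QQ}(N_j)=p$ and $\nu_j^2=p/N_j\in K_p$.

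\emph{Step 2: $\nu_j\in\QQ(\zeta_p)$.} Put $b=a(p+1)/2$, which satisfies $2b\equiv a\pmod p$. Then
\[
 \zeta_p^{b}-\zeta_p^{-b}=2i\sin(2\pi b/p)=\pm2i\sin(\pi a/p),
\]
so $2^ms_j=\pm i^{-m}X_j$ with $X_j=\prod_{a\in C_j^+}(\zeta_p^{b}-\zeta_p^{-b})\in\QQ(\zeta_p)$. Let $\mathfrak g$ be the quadratic Gauss sum, so $\mathfrak g^2=\chi(-1)p$.

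If $p\equiv1\pmod4$, then $m=(p-1)/6$ is even. Hence $i^{-m}=\pm1$ and $\sqrt p=\pm\mathfrak g$.

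If $p\equiv3\pmod4$, then $m$ is odd. Hence $i^{-m}=\pm i$ and $\sqrt p=\pm\mathfrak g/i$.

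In either case $\nu_j=\pm\mathfrak g/X_j\in\QQ(\zeta_p)$. This sign bookkeeping is the routine part of the argument.

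\emph{Step 3: excluding the quadratic twist.} Suppose $\nu_j\notin K_p$. Since $\nu_j^2\in K_p$, the field $K_p(\nu_j)\subset\QQ(\zeta_p)$ has degree $6$, so it is the unique sextic subfield $K_p(\sqrt d)$ with $d=\chi(-1)p$. An element of $K_p(\sqrt d)$ whose square lies in $K_p$ belongs to $K_p$ or to $K_p\sqrt d$. So $\nu_j=\kappa\sqrt d$ with $\kappa\in K_p^\times$, and then $N_j=p/(\kappa^2d)$.

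Taking norms from $K_p$ to $\QQ$ gives
\[
 p=\frac{p^3}{N(\kappa)^2d^3}=\frac{\pm1}{N(\kappa)^2}.
\]
This is impossible, because $p$ is not $\pm$ the square of a rational number. Therefore $\nu_j\in K_p$.

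I expect Step 3 to be the main obstacle. A direct approach would compute the sign character $h\mapsto\sigma_h(\nu_j)/\nu_j$ on $H$, which is delicate. The norm argument avoids this by using the identity $N_0N_1N_2=p$, which is the cyclotomic counterpart of the eta-quotient relation in Proposition~\ref{prop:general}.
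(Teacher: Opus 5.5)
Your proof is correct, but its decisive step differs from the paper's. Your $X_j$ is literally the paper's $D_j$, because $a(p+1)/2\equiv ha \pmod p$ with $h=(p+1)/2$ the inverse of $2$. So after your Step~2 you already hold the paper's element $\pm\mathfrak g_p/D_j$.

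From there the paper finishes with a direct Galois computation. For a generator $b=r^3$ of the cubic residues, $\sigma_b(\mathfrak g_p)=-\mathfrak g_p$ because $b$ is a nonsquare. Also $\sigma_b(D_j)=-D_j$ by a Gauss-lemma sign count: the signs picked up when $b$ permutes the pairs $\{\pm a\}$ of $C_j$ multiply to $b^m=-1$. Hence $\mathfrak g_p/D_j$ is fixed by $\mathrm{Gal}(\QQ(\zeta_p)/K_p)$. Realness and the absolute values $|D_j|=2^ms_j$ and $|\mathfrak g_p|=\sqrt p$ then identify it with $\pm\nu_j$.

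You never compute the character $h\mapsto\sigma_h(\nu_j)/\nu_j$ on the cubic residues. Instead you observe, via your Step~1 and the unique sextic subfield, that it is either trivial or the quadratic character, i.e.\ $\nu_j\in K_p$ or $\nu_j\in K_p\sqrt d$. You then exclude the second case with the norm identity $N_0N_1N_2=\Phi_p(1)=p$ and the fact that $\pm p$ is not a rational square.

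The sign computation you called delicate takes only two lines in the paper. It exhibits the Galois action explicitly and needs neither the norm relation nor the uniqueness of the sextic subfield. Your route replaces it with a more structural argument at the cost of the extra Steps~1 and~3. In return it gives facts the paper does not state: the explicit formula $\nu_j^2=p/\prod_{a\in C_j}(1-\zeta_p^a)$ and $N_{K_p/\QQ}(\nu_j)=\pm p$. At $p=13$ the latter is consistent with $\alpha\beta\gamma=13$.
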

\begin{proof}
Choose a primitive root $r$ modulo $p$, put $b=r^3$, and let $h$ be the inverse of 2 modulo $p$. Thus $b$ generates the cubic-residue subgroup, has order $2m$, and satisfies $b^m=-1$. Set
\[
 D_j=\prod_{a\in C_j^+}(\zeta_p^{ha}-\zeta_p^{-ha}),\qquad
 \mathfrak g_p=\sum_{a=1}^{p-1}\left(\frac ap\right)\zeta_p^a.
\]
Multiplication by $b$ cyclically permutes the $m$ pairs $\{\pm a\}$ in a coset. The product of the signs picked up in this cycle is $-1$, since $b^m=-1$. Consequently the automorphism $\sigma_b(\zeta_p)=\zeta_p^b$ satisfies
$\sigma_b(D_j)=-D_j$. Also $b$ is a nonsquare, so
$\sigma_b(\mathfrak g_p)=-\mathfrak g_p$ by a change of summation variable. The quotient $\mathfrak g_p/D_j$ is therefore fixed by the cubic-residue subgroup and belongs to $K_p$.

Complex conjugation multiplies $D_j$ by $(-1)^m$ and $\mathfrak g_p$ by $(-1)^{(p-1)/2}=(-1)^m$, so the quotient is real. Moreover
\[
 |D_j|=2^ms_j,\qquad |\mathfrak g_p|^2=p.
\]
For the second equality, substitute $a=tb$ in the double sum for the squared absolute value; the inner sum $\sum_{b\ne0}\zeta_p^{b(t-1)}$ is $p-1$ at $t=1$ and $-1$ otherwise. Hence
$\mathfrak g_p/D_j=\pm\nu_j$, proving the assertion.
\end{proof}

\begin{remark}\label{rem:scope}
Membership in $K_p$ is different from being a difference of two cubic Gaussian periods. At 13 the stronger identities~\eqref{eq:gauss} hold; Theorem~\ref{thm:membership} does not assert their analogue at every prime. Neither the stronger representation nor a repeated normalizing exponent is asserted here to be necessary for all linear Fricke transformation laws.
\end{remark}

\section*{Data, code, and disclosure}
No empirical datasets were used. The supplementary verifier
\texttt{verify\_level13.py} uses only the Python standard library. It
checks the period and Gauss-sum identities, the quartics~\eqref{eq:quartic},
both modularity congruences, both rows of~\eqref{eq:order-table}, the
five-coefficient table~\eqref{eq:coefficient-table}, the longer redundant
coefficient check through $q^{24}$, and the expansion~\eqref{eq:t-expansion}.
All arithmetic is exact. These finite checks do not replace the analytic
modularity, divisor, or scalar-lifting arguments in the text.

Two large language models were used in preparing this paper: Claude
Fable 5.1 (Anthropic) and ChatGPT with the GPT-5.6 Sol model (OpenAI).
Their assistance included the reduction of Theorem~\ref{thm:main} to the
identity $F\equiv0$, the cusp-divisor and Gauss-sum computations, the exact
verification of the coefficient table, the literature comparison in
Section~1, preparation of the verification code, and manuscript revision.
Responsibility for the mathematical statements, proofs, and computations
remains with the author. The author received no external funding and
declares no conflict of interest.


\begin{thebibliography}{99}
\bibitem{Brunault} F.~Brunault, \emph{On the Mahler measure associated to $X_1(13)$}, preprint (2015), arXiv:1503.04631.
\bibitem{Koornwinder} T.~H.~Koornwinder, \emph{On the equivalence of two fundamental theta identities}, Anal. Appl. (Singap.) \textbf{12} (2014), 711--725, doi:10.1142/S0219530514500559.
\bibitem{KL} D.~S.~Kubert and S.~Lang, \emph{Modular Units}, Grundlehren der mathematischen Wissenschaften 244, Springer, New York, 1981.
\bibitem{Mizuno} Y.~Mizuno, \emph{Remarks on Nahm sums for symmetrizable matrices}, Ramanujan J. \textbf{66} (2025), Article 62, doi:10.1007/s11139-025-01033-6.
\bibitem{Ono} K.~Ono, \emph{Modularity from $q$-series}, preprint (2025), arXiv:2509.20316, version 5 (2026).
\bibitem{VP} K.~R.~Vasuki and N.~Prabhu, \emph{Level-13 theta function identities of Ramanujan and applications}, Acta Arith. \textbf{219} (2025), 81--99, doi:10.4064/aa240728-7-10.
\bibitem{WW2} B.~Wang and L.~Wang, \emph{Proofs of Mizuno's conjectures on generalized rank two Nahm sums}, Trans. Amer. Math. Soc. \textbf{379} (2026), no.~1, 459--486, doi:10.1090/tran/9496.
\bibitem{WW3} B.~Wang and L.~Wang, \emph{Proofs of Mizuno's conjectures on rank three Nahm sums of index $(1,2,2)$}, Adv. Math. \textbf{477} (2025), Article 110368, doi:10.1016/j.aim.2025.110368.
\bibitem{WZ} L.~Wang and H.~Zhang, \emph{Modularity of some Nahm sums as vector-valued functions}, Ramanujan J. \textbf{68} (2025), Article 48, doi:10.1007/s11139-025-01194-4.
\bibitem{Yang04} Y.~Yang, \emph{Transformation formulas for generalized Dedekind eta functions}, Bull. London Math. Soc. \textbf{36} (2004), 671--682, doi:10.1112/S0024609304003510.
\bibitem{Yang09} Y.~Yang, \emph{Modular units and cuspidal divisor class groups of $X_1(N)$}, J. Algebra \textbf{322} (2009), 514--553, doi:10.1016/j.jalgebra.2009.04.012.
\end{thebibliography}
\end{document}